\def\I{\mathbb{I}}
\def\R{\mathbb{R}}
\def\var{\mathrm{Var}}
\def\eqdef{\buildrel{\rm def}\over =}
\newtheorem{theorem}{\sffamily Theorem}
\newtheorem{corollary}[theorem]{\sffamily Corollary}
\newtheorem{proposition}[theorem]{\sffamily Proposition}
\newtheorem{definition}[theorem]{\sffamily Definition}
\newtheorem{remark}[theorem]{\sffamily Remark}
\title{Eigenvalues of Real Matrices with Prescribed Principal Minors Sign and Descartes Law of Signs}
\begin{document}

\author[1]{Laureano Gonz\'alez--Vega}
\author[2]{J. Rafael Sendra}
\author[1]{Juana Sendra}
\affil[1]{Department of Quantitative Methods, CUNEF Universidad, Madrid, Spain}
\affil[2]{Universidad de Alcal\'a, Dpto. de F\'{\i}sica y Matem\'aticas, Alcal\'a de Henares, Madrid, Spain}
\date{}

\maketitle

\begin{abstract}
In this paper, using Descartes law of sign, we provide elementary proof of  results on the number of real eigenvalues of real matrices of which certain properties  on the signs of their principal minors are known.  More precisely, we analyze $P, N, Q, R, PN$ and $QR$ matrices as well as their variants \textit{almost}, \textit{weak} and \textit{sub-zero} matrices. 
\end{abstract}

\section*{Introduction}

In this paper we deal with matrices whose principal minors have signs that follow certain rule. Possibly the best known examples of this type of matrices are the $P$-matrices, where all principal minors are positive, or the $N$-matrices, where all principal minors are negative, see e.g. \cite{ChT}, \cite{HK}, \cite{Inada}, \cite{OPR}, \cite{PR}, \cite{T2002}, \cite{T2017}. 

As variants of   the $P$-matrices and  the $N$-matrices  appear, among others, the  so called almost $P$,  almost  $N$,   weak $P$,   weak $N$,    $P_0$--matrices, $N_0$--matrices, or the $PN$-matrices (see e.g. \cite{ChT}, \cite{CPS}, \cite{HK}, \cite{OPR}, \cite{PR}). Additionally, in the context of the linear complementarity problem, the $Q$ and $Q_0$--matrices also appear (see \cite{HK}). Complementary, in this paper we naturally extend the variants \textit{ almost, weak} and \textit{sub-zero} to the case of  $Q$ and $PN$--matrices. Likewise, as a generalization of the $Q$-matrices and the $PN$-matrices, we introduce the notions of $R$-matrices and $QR$-matrices as well as their variants.

These types of matrices appear in many applications as, for instance, the linear complementarity problem, global univalence of maps, dynamical system theory,  interval matrices and computational complexity, economics, graph theory, etc., (see e.g. \cite{ChT}, \cite{CPS}, \cite{T2002}, \cite{T2017}) One of the main keys to this applicability  is the behavior of their real eigenvalues. In general, the proofs that appear in the literature for the study of the number of eigenvalues (positive or negative) use more or less advanced results of linear algebra. We, in this work, taking into account the relationship that exists between the principal minors and the coefficients of the characteristic polynomial   and, consequently, the information that can be derived from it about their signs, we see how the direct application of Descartes  rule of signs allows to carry out, through elementary proofs, an analysis of the number of eigenvalues; this is the main contribution of this paper.

Throughout this paper, whenever we refer to the number of roots of a polynomial, or  to the number of eigenvalues of a matrix, we consider that the multiplicities have been taken into account. 

\section{Preliminaries}

In this section we recall Descartes law of signs and we introduce the type of matrices we analyze in the paper.
\subsection{Descartes Law of Signs}\label{subsec:descartes}

For a sequence of real numbers $b_0,b_1,\ldots,b_n$, $\var(b_0,b_1,\ldots,b_n)$ will denote the number of sign changes in $b_0,b_1,\ldots,b_n$ after dropping the zeros in the sequence. For a real number $a$, ${\rm sign}(a)$ will denote the sign of $a$ according to the following rule 
$${\rm sign}(a)=
\begin{cases}
+1& \hbox{if $a>0$},\\
\hfill0& \hbox{if $a=0$},\\
-1& \hbox{if $a<0$}.\\
\end{cases}
$$
Moreover, for a  non-constant polynomial $C(x)=a_0 x^n+a_1 x^{n-1}+\cdots+a_{n-1}x+a_n \in \R[x]$ we denote by  $$\var(C(x))\eqdef\var(a_0,a_{1},\ldots,a_n).$$
In this situation, the well-known Descartes law of signs is as follows;  see Section 2.2.1 in \cite{BPR} for a proof of the next two propositions.

\begin{proposition}\label{descartes}(Descartes Law of Signs)\hskip 0pt\\
Let  
$ C(x)=a_0 x^n+a_1 x^{n-1}+\cdots+a_{n-1}x+a_n\in \R[x], $
where  $a_0$ is nonzero. The number of positive real roots of $C(x)$ 
is equal to $\var(C(x))-2k$, for some non--negative integer $k$.
\end{proposition}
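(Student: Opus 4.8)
The plan is to fix two quantities, $p$ (the number of positive real roots of $C$, counted with multiplicity) and $v\eqdef\var(C)$, and to split the claim into two independent assertions: the bound $p\le v$, and the parity coincidence $p\equiv v\pmod 2$. These are together equivalent to the statement, because a non-negative integer $v-p$ that is even can be written as $2k$ with $k\ge 0$, which is exactly $p=v-2k$. So after this reduction there is nothing else to combine at the end.

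For the parity assertion I would first record an elementary endpoint identity. Writing $b_0,\ldots,b_m$ for the nonzero coefficients of $C$ listed by decreasing degree, a telescoping of the products ${\rm sign}(b_t){\rm sign}(b_{t+1})$ over $t$ shows ${\rm sign}(b_0)\,{\rm sign}(b_m)=(-1)^v$. Since $b_0=a_0$ controls the sign of $C(x)$ as $x\to+\infty$ and $b_m$ (the lowest-degree nonzero coefficient) controls the sign of $C(x)$ as $x\to 0^+$, this reads ${\rm sign}(C(0^+))\,{\rm sign}(C(+\infty))=(-1)^v$. On the other hand, on $(0,+\infty)$ the continuous function $C$ changes sign exactly at its positive roots of odd multiplicity, so the parity of ${\rm sign}(C(0^+))\,{\rm sign}(C(+\infty))$ equals the parity of the number of such roots; and that equals the parity of $p$, because positive roots of even multiplicity contribute an even amount to $p$. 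Hence $p\equiv v\pmod 2$.

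For the bound the engine is a Key Lemma: for every $c>0$ and every nonzero $P\in\R[x]$ one has $\var((x-c)P)\ge\var(P)+1$. Granting it, I factor $C(x)=a_0\,(x-r_1)\cdots(x-r_p)\,Q(x)$, where $r_1,\ldots,r_p>0$ are the positive roots and $Q$ is a real polynomial with none; since rescaling by the nonzero constant $a_0$ leaves $\var$ unchanged, $p$ successive applications of the lemma give $v=\var(C)\ge\var(Q)+p\ge p$, which is the desired bound.

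The heart, and the step I expect to be the main obstacle, is the Key Lemma. I would normalize the leading coefficient of $P$ to be positive and write $D=(x-c)P$ with coefficients $d_0=a_0$, $d_j=a_j-c\,a_{j-1}$, and $d_{n+1}=-c\,a_n$, then argue in two stages. For non-decrease $\var(D)\ge\var(P)$, group the nonzero coefficients of $P$ into its $v+1$ maximal same-sign blocks; at the top-degree position $i$ of each block, $a_{i-1}$ is either zero or the opposite-signed bottom of the previous block, so in every case ${\rm sign}(d_i)={\rm sign}(a_i)$. These selected coefficients of $D$, read by decreasing degree, alternate in sign, and since deleting entries from a sign sequence cannot increase its number of sign changes, $\var(D)\ge v$. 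For the parity flip, applying the endpoint identity to both $P$ and $D$ and using ${\rm sign}(d_{n+1})=-{\rm sign}(a_n)$ together with ${\rm sign}(d_0)={\rm sign}(a_0)$ yields $(-1)^{\var(D)}=-(-1)^{\var(P)}$, so $\var(D)$ and $\var(P)$ have opposite parity; combined with non-decrease this forces $\var(D)\ge\var(P)+1$. The delicate points to get right are the handling of interspersed zero coefficients inside the block argument and the preliminary reduction to $a_n\neq 0$ (by factoring out the largest power of $x$, which changes neither side) so that the parity-flip step is legitimate.
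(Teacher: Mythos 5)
Your proof is correct. Note, however, that the paper does not actually prove Proposition \ref{descartes}: it defers to Section 2.2.1 of \cite{BPR} for the proofs of Propositions \ref{descartes} and \ref{realr}, so there is no internal argument to match yours against; what you have produced is a self-contained replacement. The two routes are genuinely different. In \cite{BPR}, Descartes' law is obtained as a corollary of the Budan--Fourier theorem: one counts roots in a half-open interval $(a,b]$ via the sign variations of the sequence of derivatives $P, P', P'', \ldots$ evaluated at the endpoints, and Descartes' law is the special case $a=0$, $b$ beyond all roots (where $\var$ of the derivative sequence at $0$ reduces to $\var$ of the coefficient sequence). Your argument is instead the classical factor-based proof: the parity statement $p\equiv\var(C)\pmod 2$ comes from the endpoint identity ${\rm sign}(b_0)\,{\rm sign}(b_m)=(-1)^{\var(C)}$ combined with the observation that a continuous function changes sign on $(0,+\infty)$ exactly at its odd-multiplicity roots, and the bound $p\le\var(C)$ comes from the key lemma $\var((x-c)P)\ge\var(P)+1$ for $c>0$, applied once per positive root. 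The delicate points are all handled correctly: at the top index $i$ of each sign block, $-ca_{i-1}$ either vanishes or reinforces $a_i$, so ${\rm sign}(d_i)={\rm sign}(a_i)$; passing to a subsequence cannot increase the number of sign variations; and the reduction to $a_n\neq 0$ (factoring out the largest power of $x$) is exactly what legitimizes the parity-flip step ${\rm sign}(d_{n+1})=-{\rm sign}(a_n)$. What your approach buys is precisely what this paper advertises elsewhere -- elementary, self-contained reasoning about coefficient sign sequences, with no machinery beyond continuity -- so it would integrate naturally here. What the \cite{BPR} approach buys is generality: Budan--Fourier bounds root counts in arbitrary intervals, not just $(0,+\infty)$, at the cost of working with derivative sequences.
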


\begin{proposition}\label{realr}\hskip 0pt\\
Let $C(x)=a_0 x^n+a_1 x^{n-1}+\cdots+a_{n-1}x+a_n\in \R[x],$
where   $a_0$ is nonzero. If all roots of $C$ are real then the number of positive real roots of $C(x)$ 
is equal to $\var(C(x))).$
\end{proposition}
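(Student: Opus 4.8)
The plan is to apply Descartes' rule of signs twice---once to $C(x)$ and once to $C(-x)$---and then exploit the hypothesis that every root is real in order to force both applications to be exact. First I would let $m\ge 0$ denote the multiplicity of $0$ as a root of $C$, so that $m$ is the largest integer with $x^m \mid C(x)$; equivalently $a_n=\cdots=a_{n-m+1}=0$ and $a_{n-m}\neq 0$. By Proposition \ref{descartes} applied to $C(x)$, the number $p$ of positive real roots satisfies $p=\var(C(x))-2k_+$ for some integer $k_+\ge 0$. The negative real roots of $C(x)$ are exactly the opposites of the positive real roots of $C(-x)$, so applying Proposition \ref{descartes} to $C(-x)$ (whose leading coefficient $(-1)^n a_0$ is nonzero) gives that the number $q$ of negative real roots satisfies $q=\var(C(-x))-2k_-$ for some integer $k_-\ge 0$.

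Next, since by hypothesis all $n$ roots of $C$ are real, counting them by sign gives the identity $p+q+m=n$. Substituting the two Descartes expressions yields
\[
n=\var(C(x))+\var(C(-x))-2(k_++k_-)+m .
\]
Thus it suffices to establish the inequality
\[
\var(C(x))+\var(C(-x))\le n-m ,
\]
because then $n\le (n-m)-2(k_++k_-)+m=n-2(k_++k_-)$, which forces $k_+=k_-=0$, and in particular $p=\var(C(x))$, as claimed.

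The core of the argument, and the step I expect to be the main obstacle, is the inequality above, which I would prove by comparing the two sign sequences gap by gap. Writing $c_0,\dots,c_r$ for the subsequence of nonzero coefficients of $C$, occurring at powers $n=e_0>e_1>\cdots>e_r=m$, the passage from $C(x)$ to $C(-x)$ multiplies the coefficient of $x^e$ by $(-1)^e$, so between two consecutive nonzero coefficients at powers $e_i$ and $e_{i+1}$ the relative sign is preserved or reversed according to the parity of the gap $g_i=e_i-e_{i+1}\ge 1$. When $g_i$ is odd, exactly one of $C(x)$, $C(-x)$ registers a sign change at this gap, contributing $1\le g_i$; when $g_i$ is even, both or neither register a change, contributing $0$ or $2$, hence at most $g_i$ since then $g_i\ge 2$. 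Summing over $i$ and using $\sum_i g_i=e_0-e_r=n-m$ gives the desired bound. The only delicate points are the correct bookkeeping of the global factor $(-1)^n$, which shifts all signs uniformly and so leaves every count of sign changes unchanged, and the verification that dropping interior zero coefficients is compatible with the gap decomposition.
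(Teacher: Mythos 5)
Your proof is correct and complete: the reduction to the key inequality $\var(C(x))+\var(C(-x))\le n-m$ via Proposition \ref{descartes} applied to both $C(x)$ and $C(-x)$, together with the parity-of-gaps bookkeeping (odd gaps contribute exactly one sign change across the two sequences, even gaps contribute $0$ or $2$ while being at least $2$), does force $k_+=k_-=0$ and hence $p=\var(C(x))$. Note that the paper itself gives no proof of Proposition \ref{realr}---it defers to Section~2.2.1 of the cited book of Basu, Pollack and Roy---and your argument is essentially the standard one found there, so you have in effect supplied the self-contained proof the paper omits.
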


 Next corollary shows several consequences of these two propositions  that we will use very often in what follows.

\begin{corollary}\label{descartesconseq}\hskip 0pt\\
Let  \[C(x)=a_0 x^n+a_1 x^{n-1}+\cdots+a_{n-1}x+a_n\in \R[x], \]
where $a_0$ is nonzero. Then it holds that:
\begin{enumerate}
\item If $\var(C(x))=1$ then $C(x)$ has exactly one positive real root . 
\item If $\var(C(x))=0$ then $C(x)$ has no positive real roots.
\item If $\var(C(-x))=1$ then $C(x)$ has exactly one negative real root 
\item If $\var(C(-x))=0$ then $C(x)$ has no negative real roots.
\end{enumerate}
\end{corollary}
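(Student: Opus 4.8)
The plan is to derive the four statements directly from Propositions \ref{descartes} and \ref{realr}.

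For items 1 and 2, I would apply Proposition \ref{descartes} to $C(x)$ itself. That proposition tells us that the number of positive real roots equals $\var(C(x)) - 2k$ for some non-negative integer $k$. First I would handle item 2: if $\var(C(x)) = 0$, then the number of positive real roots is $-2k \geq 0$, which forces $k = 0$ and hence zero positive real roots. For item 1, if $\var(C(x)) = 1$, then the number of positive real roots is $1 - 2k$; since this must be a non-negative integer, the only admissible value of $k$ is $k = 0$, yielding exactly one positive real root.

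For items 3 and 4, the key observation is that the negative real roots of $C(x)$ are in bijection with the positive real roots of $C(-x)$: indeed, $r < 0$ is a root of $C(x)$ if and only if $-r > 0$ is a root of $C(-x)$, and this correspondence respects multiplicities. Thus I would apply items 2 and 1 (equivalently, Proposition \ref{descartes}) to the polynomial $C(-x)$ in place of $C(x)$. One small bookkeeping point is to confirm that $C(-x)$ is a genuine polynomial of the same degree $n$ with nonzero leading coefficient $(-1)^n a_0$, so that Proposition \ref{descartes} is applicable; this is immediate since $a_0 \neq 0$. Then $\var(C(-x)) = 0$ gives no positive roots of $C(-x)$, hence no negative roots of $C(x)$, establishing item 4, and $\var(C(-x)) = 1$ gives exactly one positive root of $C(-x)$, hence exactly one negative root of $C(x)$, establishing item 3.

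There is no real obstacle here; the corollary is a direct specialization of Descartes' rule to the cases $\var = 0$ and $\var = 1$, where the parity constraint pins down $k = 0$ uniquely. The only point requiring minor care is the sign-flip argument for the negative roots, and in particular noting that the leading coefficient of $C(-x)$ remains nonzero so that the hypotheses of Proposition \ref{descartes} are met. Proposition \ref{realr} is not needed for this corollary, since we are not assuming all roots are real.
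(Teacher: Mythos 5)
Your proof is correct and follows exactly the route the paper intends: the paper states this corollary without proof, as an immediate consequence of Propositions \ref{descartes} and \ref{realr}, and your argument (parity forcing $k=0$ in Descartes' rule for items 1 and 2, and the sign-flip $x \mapsto -x$ with the leading-coefficient check for items 3 and 4) is precisely the fleshed-out version of that implicit argument. Your closing observation that Proposition \ref{realr} is not actually needed here is also accurate.
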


\subsection{Type of Matrices}
Let us introduce the type of matrices we deal with in this paper. As we have mentioned in the introduction, most of the notions appear in
the literature. In addition, mimicking the requirements that are required, we introduce the notions of $R$ and $QR$--matrices with their almost, weak, and sub-zero alternatives.

For this purpose, we denote by $\R^{n\times n}$ the set of all $n\times n$ real matrices. Also, let $\mathbb{I}_n$  denote the  $n\times n$ identity. 
We recall that a principal minor is  the
determinant of a submatrix obtained from A when the same set of rows and columns
are stricken out.

\begin{definition}[Case $P$--matrix] Let $A\in \R^{n\times n}$. We say that: 
\begin{enumerate}
\item $A$ is a \textsf{$P$--matrix} if all its   principal minors are  positive.
\item $A$ is an \textsf{almost $P$--matrix} if its determinant is negative and all other principal minors are positive.
\item $A$ is a \textsf{weak $P$--matrix} if its determinant is positive and all other principal minors are non negative.
\item $A$ is a \textsf{$P_0$--matrix} if  all other principal minors are non negative.

\end{enumerate}
\end{definition}

\begin{definition}[Case $N$--matrix]   Let $A\in \R^{n\times n}$. We say that: 
\begin{enumerate}
\item $A$ is a \textsf{$N$--matrix} if all its   principal minors are  negative.
\item $A$ is an \textsf{almost $N$--matrix} if its determinant is positive and all other principal minors are negative.
\item $A$ is a \textsf{weak $N$--matrix} if its determinant is negative and all other principal minors are non positive.
\item $A$ is a \textsf{$N_0$--matrix} if all other principal minors are non positive.
\end{enumerate}
\end{definition}

\begin{definition}[Case $PN$--matrix]  Let $A\in \R^{n\times n}$. We say that: 
\begin{enumerate}
\item $A$ is a \textsf{$PN$--matrix} if the sign of each $k\times k$ principal minor, $k\in \{1,\ldots,n\}$, is  $(-1)^{k-1}$.
\item $A$ is an \textsf{almost $PN$--matrix} if the sign of its determinant  is $(-1)^{n}$, and  for $k\in \{1,\ldots,n-1\}$, the sign of each $k\times k$ principal minor is  $(-1)^{k-1}$.
\item $A$ is a \textsf{weak $PN$--matrix}  if the sign of its determinant  is $(-1)^{n-1}$, and 
 the sign of each $k\times k$ principal minor, $k\in \{1,\ldots,n-1\}$, is  either  $(-1)^{k-1}$ or $0$.
 \item $A$ is a \textsf{$(PN)_0$--matrix}  if 
 the sign of each $k\times k$ principal minor, $k\in \{1,\ldots,n\}$, is  either  $(-1)^{k-1}$ or $0$.
 \end{enumerate}
\end{definition}

\begin{definition} [Case $Q$--matrix] Let $A\in \R^{n\times n}$. We say that: 
\begin{enumerate}
\item $A$ is a \textsf{$Q$--matrix } if, for each $k\in \{1,\ldots,n\}$, the sum of all $k\times k$ principal minors is positive.
\item $A$ is an \textsf{almost $Q$--matrix} if  its determinant is negative and, for each $k\in \{1,\ldots, n-1\}$,   the sum of all $k\times k$ principal minors is positive.
\item $A$ is a \textsf{ weak $Q$-matrix } if  its determinant is positive and, for each $k\in \{1,\ldots, n-1\}$ the   sum of all $k\times k$ principal minors is non negative.
\item $A$ is a \textsf{$Q_0$-matrix } if  for each $k\in \{1,\ldots, n\}$ the   sum of all $k\times k$ principal minors is non negative.
\end{enumerate}
\end{definition}

\begin{definition} [Case $R$--matrix] Let $A\in \R^{n\times n}$. We say that: 
\begin{enumerate}
\item $A$ is an \textsf{$R$--matrix } if, for each $k\in \{1,\ldots,n\}$, the sum of all  $k\times k$ principal minors is negative.
\item $A$ is an \textsf{almost $R$--matrix} if  its determinant is positive and, for each $k\in \{1,\ldots, n-1\}$,   the sum of all $k\times k$ principal minors is negative.
\item $A$ is a \textsf{ weak $R$-matrix } if   its determinant is negative and, for each $k\in \{1,\ldots, n-1\}$ the   sum of all $k\times k$ principal minors is non positive.
\item $A$ is a \textsf{$R_0$-matrix } if   for each $k\in \{1,\ldots, n\}$ the   sum of all $k\times k$ principal minors is non positive.
\end{enumerate}
\end{definition}

\begin{definition} [Case $QR$--matrix] Let $A\in \R^{n\times n}$. We say that: 
\begin{enumerate}
\item $A$ is a \textsf{$QR$--matrix } if, for each $k\in \{1,\ldots, n\}$, the sum of all $k\times k$ principal minors is $(-1)^{k-1}$ .
\item $A$ is an \textsf{almost $QR$--matrix} if the sign of its determinant  is $(-1)^{n}$ and, for $k\in \{1,\ldots,n-1\}$, the sign of each $k\times k$ principal minor,  is  $(-1)^{k-1}$.
\item $A$ is a \textsf{ weak $QR$-matrix } if the sign of its determinant  is $(-1)^{n-1}$ and,  for $k\in \{1,\ldots,n-1\}$, the sign of the sum of all  $k\times k$ principal minor
is  either  $(-1)^{k-1}$ or $0$.
\item $A$ is a \textsf{$(QR)_0$-matrix } if   for $k\in \{1,\ldots,n\}$, the sign of the sum of all  $k\times k$ principal minor
is  either  $(-1)^{k-1}$ or $0$.
\end{enumerate}
\end{definition}

\begin{remark}\label{obs1} We observe that:
\begin{enumerate}
\item  $A$ is an almost $P$--matrix if and only if $A^{-1}$ is an $N$--matrix (see e.g \cite{OPR}).
\item Every $P$--matrix (resp.   almost $P$--matrix, weak $P$--matrix, $P_0$--matrix) is a  $Q$--matrix (resp.    almost $Q$--matrix, weak $Q$--matrix, $Q_0$--matrix).
\item  Every $N$--matrix (resp.   almost $N$--matrix, weak $N$--matrix, $N_0$--matrix) is a  $R$--matrix (resp.    almost $R$--matrix, weak $R$--matrix, $R_0$--matrix).
 \item Every $PN$--matrix (resp.   almost $PN$--matrix, weak $PN$--matrix) is a  $QR$--matrix (resp.    almost $QR$--matrix, weak $QR$--matrix, $(QR)_0$--matrix).
\end{enumerate}
\end{remark}

\section{Eigenvalues  Through Descartes Law of Signs}\label{sec:bohemian}

In this section, we show how Descartes law of signs provides automatically information about the real eigenvalues of $P$--matrices, $Q$-matrices, $N$--matrices,  $R$--matrices, $PN$--matrices and $QR$--matrices, and simplifies the usual proofs that are based on, for example for $N$--matrices, Perron-Frobenious Theorem and other results (see \cite{PR}). For the particular case of symmetric matrices we also show that the sign pattern of their eigenvalues is fixed and known in advance.
\vspace*{2mm}

Let $A\in \R^{n\times n}$, and let $C_A(\lambda)$ be its characteristic polynomial, namely,

\begin{equation}\label{eq:polcaracteristico}
C_A(\lambda)=\det(\lambda\I_n-A)=\lambda^n+a_1\lambda^{n-1}+\cdots+a_{n-1}\lambda+a_n.
\end{equation}

\noindent We recall that the coefficients of $C_A(\lambda)$ can be expressed by means of the principal minors. More precisely, it holds that 
\begin{equation}\label{eq:caracteristico}
a_k=(-1)^k    E_{k}(A)
\end{equation}
where   $E_{k}(A)$ denotes the sum of all $k\times k$ principal minors of $A$ (see e.g \cite{HJ}).

\begin{theorem}\label{DescartesQ}(Eigenvalues of $Q$--matrices)
\begin{enumerate}
\item If $A$ is either a $Q$--matrix or a  weak  $Q$--matrix or a   $Q_0$--matrix,  then   $A$ has no  negative real eigenvalues.
\item If $A$ is an almost $Q$--matrix then $A$ has exactly one  negative real eigenvalue. 
\item  If $A$ is an  $n\times n$ symmetric that is either a $Q$--matrix or  a weak $Q$--matrix then $A$ has $n$  positive real eigenvalues.  
\item  If $A$ is an  $n\times n$ symmetric  $Q_0$--matrix  then $A$ has $n$  non negative real eigenvalues. 
\item If $A$ is an $n\times n$ symmetric almost $Q$--matrix, then $A$ has exactly one negative real eigenvalue and $n-1$  positive real eigenvalues.  
\end{enumerate}
\end{theorem}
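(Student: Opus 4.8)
The plan is to translate each hypothesis on the principal minors directly into the sign pattern of the coefficients of $C_A(\lambda)$ via \eqref{eq:caracteristico}, and then read off the eigenvalue count from Corollary \ref{descartesconseq}. Setting $E_0(A)=1$, relation \eqref{eq:caracteristico} gives ${\rm sign}(a_k)=(-1)^k\,{\rm sign}(E_k(A))$ for $k=0,\ldots,n$, with $a_0=1$. Recall that every eigenvalue of $A$ is a root of $C_A(\lambda)$, so ``negative eigenvalue'' means ``negative real root'' and ``positive eigenvalue'' means ``positive real root''.

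First I would record the observation that controls the negative eigenvalues. The coefficient of $\lambda^{n-k}$ in $C_A(-\lambda)$ is $(-1)^{n-k}a_k$, whose sign is $(-1)^{n-k}(-1)^k\,{\rm sign}(E_k(A))=(-1)^n\,{\rm sign}(E_k(A))$. Since multiplying a whole sequence by the fixed constant $(-1)^n$ does not alter the number of sign changes, this yields
$$\var(C_A(-\lambda))=\var\big(E_0(A),E_1(A),\ldots,E_n(A)\big).$$
This single identity makes parts 1 and 2 immediate. For a $Q$--matrix every $E_k(A)>0$, while for a weak $Q$--matrix and a $Q_0$--matrix every $E_k(A)\ge 0$ (with $E_0(A)=1>0$); in all three cases the sequence has no sign change, so $\var(C_A(-\lambda))=0$ and Corollary \ref{descartesconseq}(4) gives no negative real eigenvalue, proving part 1. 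For an almost $Q$--matrix the sequence reads $+,+,\ldots,+,-$ (only the last entry $E_n(A)=\det A$ is negative), so $\var(C_A(-\lambda))=1$ and Corollary \ref{descartesconseq}(3) gives exactly one negative real eigenvalue, proving part 2.

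For the symmetric statements 3--5 I would first invoke that a real symmetric matrix has only real eigenvalues, so the spectrum of $A$ consists of $n$ real numbers counted with multiplicity. The negative part of the spectrum is already pinned down by parts 1 and 2, so it remains only to exclude the eigenvalue $0$, using that the product of the eigenvalues equals $\det A=E_n(A)$. In statement 3 ($Q$-- or weak $Q$--matrix) we have $\det A=E_n(A)>0$, hence no zero eigenvalue; combined with part 1 this forces all $n$ real eigenvalues to be positive. In statement 4 ($Q_0$--matrix) part 1 already gives that no eigenvalue is negative, and since all are real they are all non negative, which is exactly the claim (a zero eigenvalue being allowed here). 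In statement 5 (almost $Q$--matrix) part 2 gives exactly one negative eigenvalue, while $\det A=E_n(A)<0\neq 0$ rules out a zero eigenvalue; as the $n$ eigenvalues are real, the remaining $n-1$ must be positive.

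The computation is essentially sign bookkeeping, so I do not expect a serious obstacle; the one point that requires care is the handling of vanishing principal--minor sums in the weak and $Q_0$ cases. There the direct count $\var(C_A(\lambda))$ of positive roots is awkward, because dropping interior zeros can destroy the strict alternation of signs. This is precisely why, for the symmetric cases, I would avoid counting positive roots by Descartes and instead deduce positivity indirectly, from the absence of negative eigenvalues together with the sign of $\det A$.
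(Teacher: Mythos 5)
Your proposal is correct and follows essentially the same route as the paper: translate the hypotheses on the $E_k(A)$ into the sign pattern of the coefficients of $C_A(-\lambda)$ via \eqref{eq:caracteristico}, apply Corollary \ref{descartesconseq}, and settle the symmetric cases by combining realness of the spectrum with the non-vanishing (or possible vanishing) of $\det A$. Your packaging of the sign bookkeeping as the single identity $\var(C_A(-\lambda))=\var\big(E_0(A),E_1(A),\ldots,E_n(A)\big)$ is a tidy reformulation of the computation the paper carries out inline, but it is the same argument.
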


\begin{proof} Let   $C_A(\lambda)$ be as in \eqref{eq:polcaracteristico} and $E_{k}(A)$ be as in \eqref{eq:caracteristico}.
\begin{enumerate}
\item 
If  $A$ is a $Q$--matrix, for  all $k\in \{1,\ldots,n\}$  it holds that $E_{k}(A)>0$.  Furthermore, by \eqref{eq:caracteristico}, ${\rm sign}(a_k)=(-1)^{k}$
for $k\in\{1,\ldots,n\}$. Since
\begin{equation}\label{eq-CAmenos}
C_A(-\lambda)=(-1)^{n}\lambda^n+(-1)^{n-1}a_1\lambda^{n-1}+\cdots+(-1)^1a_{n-1}\lambda+a_n,
\end{equation}
the sequence of sign of the coefficients of $C_A(-\lambda)$ is
$$\begin{array}{l} \{(-1)^{n},(-1)^{n-1}(-1)^1,(-1)^{n-2}(-1)^2,\ldots,(-1)^1(-1)^{n-1},(-1)^{n}\}=\\
\{ (-1)^{n},(-1)^{n},\ldots,(-1)^{n},(-1)^{n}\}\end{array}$$
Thus $\var(C_A(-\lambda))=0$ and, according to Corollary \ref{descartesconseq} (4), we conclude that $A$ has  no   negative real eigenvalues.

If $A$ is either a weak $Q$--matrix or a $Q_0$ matrix, reasoning as above, the first   coefficient of $C_A(-\lambda)$ is $(-1)^{n}$, and all other coefficients have  sign either $(-1)^{n}$ or $0$. Then $\var(C_A(-\lambda))=0$, and
 the result follows from Corollary \ref{descartesconseq} (4).
\item If $A$ is an almost $Q$--matrix, reasoning as in the previous statement, the sequence of sign of the coefficients of $C_A(-\lambda)$ is
$$\{ (-1)^{n},(-1)^{n},\ldots,(-1)^{n},(-1)^{n+1}\}.$$
Thus $\var(C_A(-\lambda))=1$ and, according to Corollary \ref{descartesconseq} (3), we conclude that $A$ has exactly one negative real eigenvalue.
\item Since $A$ is symmetric, by (1) we know that all eigenvalues are non negative. Moreover, since $A$ is either $R$ or weak $R$, the determinant of $A$ is non-zero and hence, by Statement (1), we conclude that $A$ has $n$  positive real eigenvalues. 
\item It follows as in the previous case but taking into account that, since $A$ is $Q_0$, the determinant of $A$ can be zero.
\item It follows from the symmetry, from the fact that $\det(A)\neq 0$, and from statement (2).
\end{enumerate}
\end{proof}

The following corollary follows from Theorem \ref{DescartesQ} and Remark \ref{obs1}  (2).

\begin{corollary}\label{DescartesP}(Eigenvalues of $P$--matrices)
\begin{enumerate}
\item If $A$ is either a $P$--matrix or a  weak  $P$--matrix or a   $P_0$--matrix,  then   $A$ has no  negative real eigenvalues.
\item If $A$ is an almost $P$--matrix then $A$ has exactly one  negative real eigenvalue. 
\item  If $A$ is an  $n\times n$ symmetric that is either a $P$--matrix or  a weak $P$--matrix then $A$ has $n$  positive real eigenvalues.  
\item  If $A$ is an  $n\times n$ symmetric  $P_0$--matrix  then $A$ has $n$  non negative real eigenvalues. 
\item If $A$ is an $n\times n$ symmetric almost $P$--matrix, then $A$ has exactly one negative real eigenvalue and $n-1$  positive real eigenvalues.  
\end{enumerate}

\end{corollary}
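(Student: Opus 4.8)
The plan is to derive each of the five assertions directly from the corresponding statement of Theorem \ref{DescartesQ}, exploiting the inclusions recorded in Remark \ref{obs1}(2). The key observation is that the conclusions of Theorem \ref{DescartesQ} were obtained purely from the signs of the quantities $E_k(A)$, i.e.\ from the sums of all $k\times k$ principal minors, and never from the individual minors. Consequently, once we know that a $P$--matrix (respectively almost $P$--, weak $P$--, or $P_0$--matrix) inherits the defining sign condition on the $E_k(A)$ of the matching $Q$--class, the desired spectral statements come for free.

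First I would recall why the inclusions of Remark \ref{obs1}(2) hold, since this is the only computation involved. If $A$ is a $P$--matrix, then every $k\times k$ principal minor is positive, hence their sum $E_k(A)$ is positive for each $k\in\{1,\ldots,n\}$; this is precisely the $Q$--matrix condition. The same elementary remark shows that an almost $P$--matrix is an almost $Q$--matrix (the determinant is negative, and each smaller $E_k(A)$ is a sum of positive minors, hence positive), that a weak $P$--matrix is a weak $Q$--matrix, and that a $P_0$--matrix is a $Q_0$--matrix. Moreover, passing from the $P$--class to the $Q$--class does not alter $A$ at all, so it preserves symmetry.

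With these inclusions in hand, the five parts follow by direct substitution. For statement (1), a $P$--, weak $P$--, or $P_0$--matrix is a $Q$--, weak $Q$--, or $Q_0$--matrix, so Theorem \ref{DescartesQ}(1) yields no negative real eigenvalues. Statement (2) follows from Theorem \ref{DescartesQ}(2) applied to the associated almost $Q$--matrix. For the symmetric cases, statements (3), (4) and (5) follow respectively from Theorem \ref{DescartesQ}(3), (4) and (5), since a symmetric $P$--class matrix is a symmetric $Q$--class matrix.

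In fact there is essentially no obstacle to overcome here: the whole content of the corollary is the reduction via Remark \ref{obs1}(2). The only point requiring a line of care is the verification of the inclusions for the \emph{variants}, where one must confirm that the sum of the principal minors inherits the prescribed sign from the individual minors; but since a sum of positive (respectively non-negative) numbers is positive (respectively non-negative), even this is immediate.
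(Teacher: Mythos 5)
Your proposal is correct and follows exactly the paper's own route: the authors also obtain this corollary by combining Theorem \ref{DescartesQ} with the inclusions of Remark \ref{obs1}(2), which you justify (correctly, if more explicitly than the paper) by noting that sums of positive, respectively non-negative, minors inherit the required signs on $E_k(A)$, and that the reduction trivially preserves symmetry.
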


\begin{theorem}\label{DescartesR}(Eigenvalues of $R$--matrices)
\begin{enumerate}
\item If $A$ is either an $R$--matrix or a  weak  $R$--matrix, then $A$ has exactly one negative real eigenvalue. 
\item If $A$ is    $R_0$--matrix, then $A$ has at most one negative real eigenvalue. 
 \item If $A$ is an almost $R$--matrix then $A$ has either zero or  two  negative real  eigenvalues. 
 \item  If $A$ is an $n\times n$ symmetric that is either   $R$--matrix or    weak  $R$--matrix, then $A$ has $n-1$ positive real  eigenvalues and one negative eigenvalue. 
 \item  If $A$ is an $n\times n$ symmetric $R_0$--matrix, then $A$ has either
 $n$ non negative real  eigenvalues or   $n-1$  non negative real  eigenvalues and one negative eigenvalue.
\item If $A$ is an $n\times n$ symmetric almost $R$--matrix, then $A$ has either $n$  positive  real eigenvalues or $n-2$  positive real eigenvalues and two negative real eigenvalues.
\end{enumerate}
\end{theorem}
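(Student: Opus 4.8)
The plan is to read the sign pattern of the characteristic polynomial straight off the defining minor conditions, using $a_k=(-1)^kE_k(A)$ from \eqref{eq:caracteristico}, and then apply Descartes law of signs to $C_A(-\lambda)$ for the negative eigenvalues and to $C_A(\lambda)$ for the positive ones. By \eqref{eq-CAmenos} the coefficient of $\lambda^{n-k}$ in $C_A(-\lambda)$ is $(-1)^{n-k}a_k$, so its sign is $(-1)^{n-k}{\rm sign}(a_k)$; every item then reduces to a short sign count, exactly as in the proof of Theorem \ref{DescartesQ}.

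I would first dispatch the three items about arbitrary matrices. For an $R$--matrix, $E_k(A)<0$ for all $k$ gives ${\rm sign}(a_k)=(-1)^{k+1}$ (with $a_0=1$), so in $C_A(-\lambda)$ the leading coefficient has sign $(-1)^n$ and every later one has sign $(-1)^{n+1}$: a single sign change, whence $\var(C_A(-\lambda))=1$ and Corollary \ref{descartesconseq}(3) gives exactly one negative eigenvalue. The weak $R$ case is the same, except intermediate coefficients may vanish; the trailing one does not (as $\det(A)<0$), so the lone sign change survives. For an $R_0$--matrix all nonleading coefficients lie in $\{(-1)^{n+1},0\}$, giving $\var(C_A(-\lambda))\in\{0,1\}$ and hence at most one negative eigenvalue. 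For an almost $R$--matrix the hypothesis $\det(A)>0$ restores the trailing sign to $(-1)^n$, so the sequence opens and closes at $(-1)^n$ and is $(-1)^{n+1}$ in between: two sign changes, and Proposition \ref{descartes} yields $2-2k$ negative eigenvalues, i.e.\ zero or two.

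I would then obtain the symmetric items from these. A real symmetric matrix has only real eigenvalues, so both $C_A(\lambda)$ and $C_A(-\lambda)$ split over $\R$ and Proposition \ref{realr} applies: the number of negative eigenvalues is \emph{exactly} $\var(C_A(-\lambda))$, with no complex roots to hide a discrepancy. The negative count is therefore fixed by items (1)--(3), and the remaining eigenvalues are placed using $\det(A)$. When $\det(A)\neq 0$ there is no zero eigenvalue, so in the $R$ and weak $R$ cases the $n-1$ surviving eigenvalues are positive (item (4)), while in the almost $R$ case the ``zero or two'' alternative of item (3) becomes ``$n$ positive, or $n-2$ positive and two negative'' (item (6)). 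In the $R_0$ case the determinant may vanish, so the $n$ or $n-1$ remaining eigenvalues are only guaranteed non-negative (item (5)).

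The step needing the most care is item (6): one must ensure that item (3) really gives $0$ or $2$, not merely ``at most two,'' and this hinges entirely on the trailing coefficient being forced to sign $(-1)^n$ by $\det(A)>0$, which is what pushes $\var(C_A(-\lambda))$ up to $2$. As a cross-check on the positive side, the same sign bookkeeping gives $\var(C_A(\lambda))=n-2$ under the almost $R$ hypotheses, so Proposition \ref{realr} independently confirms $n-2$ positive eigenvalues in the symmetric case, in agreement with the stated dichotomy.
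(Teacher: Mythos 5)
Your proposal is correct and follows essentially the same route as the paper: items (1)--(3) by reading the sign sequence of $C_A(-\lambda)$ from $a_k=(-1)^kE_k(A)$ and applying Corollary \ref{descartesconseq} or Proposition \ref{descartes}, and items (4)--(6) by combining the realness of the eigenvalues of a symmetric matrix with the negative-eigenvalue counts and the sign of $\det(A)$. Your closing cross-check via Proposition \ref{realr} (that $\var(C_A(\lambda))=n-2$ for a symmetric almost $R$--matrix) is a nice bonus the paper omits: it shows the count of negative eigenvalues is \emph{exactly} two in that case, so the ``$n$ positive'' branch of item (6) never actually occurs.
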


\begin{proof}  Let   $C_A(\lambda)$ be as in \eqref{eq:polcaracteristico} and $E_{k}(A)$ be as in \eqref{eq:caracteristico}.
\begin{enumerate}
\item 
If  $A$ is a $R$--matrix, for  all $k\in \{1,\ldots,n\}$  it holds that $E_{k}(A)<0$.  Furthermore,  by \eqref{eq:caracteristico}, ${\rm sign}(a_k)=(-1)^{k+1}$
for $k\in\{1,\ldots,n\}$. Taking into account \eqref{eq-CAmenos}, 
the sequence of sign of the coefficients of $C_A(-\lambda)$ is
$$\begin{array}{l} \{(-1)^{n},(-1)^{n-1}(-1)^2,(-1)^{n-2}(-1)^3,\ldots,(-1)^1(-1)^{n},(-1)^{n+1}\}=\\
\{ (-1)^{n},(-1)^{n+1},\ldots,(-1)^{n+1},(-1)^{n+1}\}.\end{array}$$
Thus $\var(C_A(-\lambda))=1$ and, according to Corollary \ref{descartesconseq} (3), we conclude that $A$ has exactly one negative  real eigenvalue.

If $A$ is  a weak $R$--matrix, reasoning as above, the first   coefficient of $C_A(-\lambda)$ is $(-1)^{n}$, the last one is $(-1)^{n+1}$ because the determinant is non-zero, and all intermediate coefficients have  sign either $(-1)^{n+1}$ or $0$. So, $\var(C_A(-\lambda))=1$. Now the result follows from Corollary \ref{descartesconseq} (3).
\item If $A$ is an $R_0$--matrix, reasoning as above, the first   coefficient of $C_A(-\lambda)$ is $(-1)^{n}$, and all other coefficients have  sign either $(-1)^{n+1}$ or $0$. So, $\var(C_A(-\lambda))\in \{0,1\}$. Now the result follows from Corollary \ref{descartesconseq} (3), (4).
\item If $A$ is an almost $R$--matrix, reasoning as in the proof of   statement (1), the sequence of sign of the coefficients of $C_A(-\lambda)$ is
$$\{ (-1)^{n},(-1)^{n+1},\ldots,(-1)^{n+1},(-1)^{n}\}.$$
Thus, $\var(C_A(-\lambda))=2$. Now, according to Proposition \ref{descartes}, we conclude that $A$  has either zero or  two  negative real eigenvalues. 
\item Since $A$ is symmetric,  all eigenvalues are real. Moreover, since $A$ is either $R$ or weak $R$, the determinant of $A$ is non-zero and hence, by Statement (1), we conclude that $A$ has $n-1$ positive real  eigenvalues and one negative eigenvalue. 
 \item It follows from Statement (2) taking into account that all eigenvalues are real and that the determinant can be zero.
  \item It follows from Statement (3) taking into account that all eigenvalues are real and that the determinant is not zero.
\end{enumerate}
\end{proof}

The following Corollary follows from Theorem \ref{DescartesR} and Remark \ref{obs1}  (3).

\begin{corollary}\label{DescartesN}(Eigenvalues of $N$--matrices)
\begin{enumerate}
\item If $A$ is either an $N$--matrix or a  weak  $N$--matrix, then $A$ has exactly one negative real eigenvalue. 
\item If $A$ is    $N_0$--matrix, then $A$ has at most one negative real eigenvalue. 
 \item If $A$ is an almost $N$--matrix then $A$ has either zero or  two  negative real  eigenvalues. 
 \item  If $A$ is an $n\times n$ symmetric that is either   $N$--matrix or    weak  $N$--matrix, then $A$ has $n-1$ positive real  eigenvalues and one negative eigenvalue. 
 \item  If $A$ is an $n\times n$ symmetric $N_0$--matrix, then $A$ has either
 $n$ non negative real  eigenvalues or   $n-1$  non negative real  eigenvalues and one negative eigenvalue.
\item If $A$ is an $n\times n$ symmetric almost $N$--matrix, then $A$ has either $n$  positive  real eigenvalues or $n-2$  positive real eigenvalues and two negative real eigenvalues.
\end{enumerate}
\end{corollary}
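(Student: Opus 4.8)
The plan is to deduce every statement of the corollary directly from the corresponding statement of Theorem \ref{DescartesR}, using the class inclusions recorded in Remark \ref{obs1}(3). The key observation is that each $N$-type hypothesis is \emph{stronger} than, and hence implies, the matching $R$-type hypothesis: if all $k\times k$ principal minors of $A$ are negative (resp. non-positive), then in particular the sum of all $k\times k$ principal minors is negative (resp. non-positive), since a finite sum of negative (resp. non-positive) numbers has the same sign behaviour.

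First I would spell out the four inclusions explicitly: an $N$-matrix is an $R$-matrix, a weak $N$-matrix is a weak $R$-matrix, an $N_0$-matrix is an $R_0$-matrix, and an almost $N$-matrix is an almost $R$-matrix. In each case the sign condition imposed on the determinant (the single $n\times n$ minor) is identical in the two definitions, while the conditions on the lower-order minors are merely relaxed from a statement about \emph{each} minor to a statement about \emph{the sum} of those minors. Consequently no computation beyond summing numbers of a fixed sign is required to establish the inclusions.

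With these inclusions in hand, statements (1)--(3) of the corollary follow verbatim from statements (1)--(3) of Theorem \ref{DescartesR}, because the eigenvalue counts obtained there depend only on the $R$-type hypotheses (via Descartes law of signs applied to $C_A(-\lambda)$). For the symmetric statements (4)--(6), I would observe that the property of being symmetric is untouched by passing from the $N$-class to the $R$-class; thus a symmetric $N$-matrix (resp. weak $N$, $N_0$, almost $N$) is in particular a symmetric $R$-matrix (resp. weak $R$, $R_0$, almost $R$), and statements (4)--(6) of Theorem \ref{DescartesR} apply directly, all eigenvalues being real by symmetry.

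There is essentially no obstacle here: the analytic content is entirely carried by Theorem \ref{DescartesR}, and the corollary is a bookkeeping exercise in matching hypotheses. The only point demanding a moment's care is the \emph{direction} of the inclusions — one must verify that it is the $N$-hypothesis that implies the $R$-hypothesis, not the reverse — but this is immediate from the definitions, since an inequality on each summand forces the same inequality on their sum.
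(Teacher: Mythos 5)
Your proposal is correct and follows exactly the paper's own route: the paper derives this corollary in one line from Theorem \ref{DescartesR} together with the inclusion in Remark \ref{obs1} (3), which is precisely the reduction you carry out (your write-up merely makes explicit the verification that each $N$-type hypothesis implies the matching $R$-type hypothesis by summing minors of a fixed sign).
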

\begin{theorem}\label{DescartesQR}(Eigenvalues of $QR$--matrices)
\begin{enumerate}
\item If $A$ is  either an $QR$--matrix or a weak $QR$--matrix, then $A$ has exactly one positive real eigenvalue. 
\item If $A$ is a   $(QR)_0$--matrix then $A$ has  at most one positive real eigenvalue. 
 \item If $A$ is an almost $QR$--matrix then $A$ has either zero or  two  positive real  eigenvalues. 
\item  If $A$ is an $n\times n$ symmetric that is either   $QR$--matrix or    weak  $QR$--matrix, then $A$ has $n-1$ negative real  eigenvalues and one positive eigenvalue. 
 \item  If $A$ is an $n\times n$ symmetric $(QR)_0$--matrix, then $A$ has either
 $n$ non positive real  eigenvalues or   $n-1$  non positive real  eigenvalues and one positive eigenvalue.
\item If $A$ is an $n\times n$ symmetric almost $QR$--matrix, then $A$ has either $n$  negative  real eigenvalues or $n-2$  negative real eigenvalues and two positive real eigenvalues.
\end{enumerate}
\end{theorem}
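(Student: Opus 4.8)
The plan is to reuse the coefficient--sign bookkeeping from the proofs of Theorems~\ref{DescartesQ} and~\ref{DescartesR}, but now applied \emph{directly} to $C_A(\lambda)$ rather than to $C_A(-\lambda)$: for $QR$--type matrices the prescribed sign $(-1)^{k-1}$ of the minor sums combines with the factor $(-1)^k$ in~\eqref{eq:caracteristico} to produce a clean sign sequence for the \emph{positive} roots, which is why the conclusions here concern positive eigenvalues. First I would translate each hypothesis into signs of $a_k=(-1)^kE_k(A)$. If $A$ is a $QR$--matrix then $\mathrm{sign}(E_k(A))=(-1)^{k-1}$, so $\mathrm{sign}(a_k)=(-1)^k(-1)^{k-1}=-1$ for every $k\in\{1,\ldots,n\}$; together with $a_0=1$ this gives the sequence $\{+1,-1,\ldots,-1\}$, whence $\var(C_A(\lambda))=1$ and Corollary~\ref{descartesconseq}(1) delivers exactly one positive real eigenvalue.

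The weak $QR$ and $(QR)_0$ cases run through the same computation, the only care being that Descartes counts sign changes after deleting zeros. For a weak $QR$--matrix the determinant forces $\mathrm{sign}(a_n)=(-1)^n(-1)^{n-1}=-1$ (strictly nonzero) while $\mathrm{sign}(a_k)\in\{-1,0\}$ for $k<n$; after removing zeros the sequence still reads $\{+1,-1,\ldots,-1\}$, so $\var(C_A(\lambda))=1$. For a $(QR)_0$--matrix every $a_k$ with $k\ge1$ has sign in $\{-1,0\}$, so the cleaned sequence is either $\{+1\}$ or $\{+1,-1,\ldots,-1\}$ and $\var(C_A(\lambda))\in\{0,1\}$, giving at most one positive eigenvalue by Corollary~\ref{descartesconseq}(1),(2). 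For an almost $QR$--matrix the determinant flips the last sign, $\mathrm{sign}(a_n)=(-1)^n(-1)^n=+1$, while $\mathrm{sign}(a_k)=-1$ for $k<n$, producing $\{+1,-1,\ldots,-1,+1\}$ with $\var(C_A(\lambda))=2$; Proposition~\ref{descartes} then yields zero or two positive eigenvalues.

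For the symmetric statements (4)--(6) I would invoke the spectral theorem so that all $n$ eigenvalues are real, and combine this with (1)--(3) and with the (non)vanishing of $\det(A)=E_n(A)$. In the $QR$ and weak $QR$ cases $E_n(A)\neq0$ excludes a zero eigenvalue, so the single positive eigenvalue from (1) forces $n-1$ negative and one positive. For a symmetric $(QR)_0$--matrix the determinant may vanish, so (2) gives either no positive eigenvalue ($n$ non positive) or one positive (and $n-1$ non positive). For a symmetric almost $QR$--matrix $E_n(A)\neq0$ again rules out zero eigenvalues, so (3) splits into $n$ negative, or $n-2$ negative together with two positive.

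I expect no genuine obstacle in the computations; the one place that rewards attention is the zero coefficients in the weak and $(QR)_0$ cases, where one must check exactly which coefficients are guaranteed nonzero (the leading $1$, and $a_n$ in the weak case) so that $\var$ is pinned down and not merely bounded. I also note that in the symmetric almost $QR$ case Proposition~\ref{realr} would sharpen (6): since all roots are real the number of positive roots equals $\var(C_A(\lambda))=2$ exactly, so the ``$n$ negative'' alternative cannot actually occur. The statement is phrased as the conservative disjunction, matching the analogous symmetric conclusion in Theorem~\ref{DescartesR}.
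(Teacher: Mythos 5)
Your proposal is correct and follows essentially the same route as the paper: the identical sign computation $\mathrm{sign}(a_k)=(-1)^k(-1)^{k-1}=-1$ applied directly to $C_A(\lambda)$, giving $\var(C_A(\lambda))=1$, $\in\{0,1\}$, and $2$ in the respective cases, and the same combination of the spectral theorem with the (non)vanishing of $\det(A)$ for the symmetric statements (4)--(6), exactly as the paper does by referring back to Theorem~\ref{DescartesR}. Your extra observation that Proposition~\ref{realr} makes the ``$n$ negative'' alternative in (6) vacuous is also correct (and the same sharpening would apply to Theorem~\ref{DescartesR}(6)); the paper only states the conservative disjunction.
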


\begin{proof}  Let   $C_A(\lambda)$ be as in \eqref{eq:polcaracteristico} and $E_{k}(A)$ be as in \eqref{eq:caracteristico}.
\begin{enumerate}
\item 
Since  $A$ is a $QR$--matrix, for  all $k\in \{1,\ldots,n\}$  it holds that  ${\rm sign} (E_{k}(A))= (-1)^{k-1}$.  So
$${\rm sign}(a_k)=(-1)^{k}(-1)^{k-1}=(-1)^{2k-1}=-1.$$
 Thus, the sequence of sign of the coefficients of $C_A(\lambda)$ is $$\{+1,-1,-1,\ldots,-1,-1\},$$
and $\var(C_A(\lambda))=1$. According to Corollary \ref{descartesconseq} (1), we conclude that $C(\lambda)$ has  exactly one real positive eigenvalue. 

If $A$ is a weak $QR$--matrix, then the first and the last elements in the sequence of sign of the coefficients of $C_A(\lambda)$ are $1$ and $-1$, respectively; while all the others are either $-1$ or $0$. Therefore, $\var(C_A(\lambda))=1$ and proof ends as above.
\item If $A$ is $(QR)_0$, reasoning as in the proof of the previous statement, the first  element in the sequence of sign of the coefficients of $C_A(\lambda)$ is $1$ and all the other are either $-1$ or $0$. Therefore $\var(C_A(\lambda))\in \{0,1\}$. Thus, applying Corollary \ref{descartesconseq} (1), one concludes the proof.
\item Let $A$ be an almost $QR$--matrix,  reasoning as in  the previous statements, the sequence of sign of the coefficients of $C_A(\lambda)$ is
$${\rm sign}(a_k)=-1\,\,\,\, \mbox{for}\,\, k\in\{1,\ldots,n-1\}$$
 and
$${\rm sign}(a_n)=(-1)^{2n}=+1.$$ Thus,
we have that the sign of the coefficients of $C(\lambda)$ is
$\{+1,-1,\ldots,-1,+1\}$
and therefore $\var(C_A(\lambda))=2$ and, according to Proposition \ref{descartes}, we conclude that $C(\lambda)$ has  no real positive roots or  exactly two real positive roots. 
\end{enumerate}
The proof of the Statements (4), (5), (6) are analogous to those in Theorem \ref{DescartesR}.
\end{proof}

The following Corollary follows from Theorem \ref{DescartesQR} and Remark \ref{obs1}  (4).

\begin{corollary}\label{DescartesPN}(Eigenvalues of  $PN$--matrices)
\begin{enumerate}
\item If $A$ is  either an $PN$--matrix or a weak $PN$--matrix, then $A$ has exactly one positive real eigenvalue. 
\item If $A$ is a   $(PN)_0$--matrix then $A$ has  at most one positive real eigenvalue. 
 \item If $A$ is an almost $PN$--matrix then $A$ has either zero or  two  positive real  eigenvalues. 
\item  If $A$ is an $n\times n$ symmetric that is either   $PN$--matrix or    weak  $PN$--matrix, then $A$ has $n-1$ negative real  eigenvalues and one positive eigenvalue. 
 \item  If $A$ is an $n\times n$ symmetric $(PN)_0$--matrix, then $A$ has either
 $n$ non positive real  eigenvalues or   $n-1$  non positive real  eigenvalues and one positive eigenvalue.
\item If $A$ is an $n\times n$ symmetric almost $PN$--matrix, then $A$ has either $n$  negative  real eigenvalues or $n-2$  negative real eigenvalues and two positive real eigenvalues.
\end{enumerate}\end{corollary}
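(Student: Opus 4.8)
The plan is to deduce this corollary directly from Theorem~\ref{DescartesQR} by means of the inclusions recorded in Remark~\ref{obs1}~(4), so that no fresh computation with the characteristic polynomial is required. The conceptual point is that the six statements of Corollary~\ref{DescartesPN} are, term by term, exactly the six statements of Theorem~\ref{DescartesQR} with ``$QR$'' replaced by ``$PN$''; the only thing to justify is that a matrix obeying the stronger per-minor $PN$-type hypothesis also obeys the weaker summed-minor $QR$-type hypothesis.

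First I would verify the sign-inheritance underlying Remark~\ref{obs1}~(4). If $A$ is a $PN$-matrix, then for each fixed $k\in\{1,\ldots,n\}$ every individual $k\times k$ principal minor has sign $(-1)^{k-1}$. A sum of real numbers all carrying the same sign $(-1)^{k-1}$ again has sign $(-1)^{k-1}$, so $\mathrm{sign}(E_k(A))=(-1)^{k-1}$ for all $k$, which is precisely the defining condition of a $QR$-matrix. Applying the same observation to the variants: for an almost $PN$-matrix the minors of size $k\le n-1$ force $\mathrm{sign}(E_k(A))=(-1)^{k-1}$ while $E_n(A)=\det(A)$ has sign $(-1)^n$, giving an almost $QR$-matrix; for a weak $PN$-matrix each $k\times k$ minor with $k\le n-1$ is $(-1)^{k-1}$ or $0$ and $\det(A)$ has sign $(-1)^{n-1}$, so $E_k(A)$ is $(-1)^{k-1}$ or $0$ and $A$ is a weak $QR$-matrix; and for a $(PN)_0$-matrix each $k\times k$ minor is $(-1)^{k-1}$ or $0$, whence $E_k(A)$ is $(-1)^{k-1}$ or $0$ and $A$ is a $(QR)_0$-matrix.

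With these inclusions in hand, each statement of the corollary reduces to the correspondingly numbered statement of Theorem~\ref{DescartesQR}: Statement~(1) uses $PN\Rightarrow QR$ and weak $PN\Rightarrow$ weak $QR$; Statement~(2) uses $(PN)_0\Rightarrow(QR)_0$; and Statement~(3) uses almost $PN\Rightarrow$ almost $QR$. Since the reclassification does not alter the entries of $A$, the hypothesis of symmetry is preserved verbatim, so Statements~(4)--(6) likewise follow from Statements~(4)--(6) of Theorem~\ref{DescartesQR} applied to the corresponding symmetric $QR$-variant.

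The only delicate point is the sign-of-a-sum step in the weak and sub-zero cases, where some principal minors are permitted to vanish. Here I would observe that because every nonzero summand in $E_k(A)$ shares the common sign $(-1)^{k-1}$, no cancellation can occur: the sum is either $0$, when all summands vanish, or has sign $(-1)^{k-1}$. This is exactly the two-valued condition ``$(-1)^{k-1}$ or $0$'' demanded by the weak and sub-zero $QR$-definitions, so the reduction goes through with no additional hypotheses.
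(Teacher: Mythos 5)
Your proposal is correct and follows exactly the paper's own route: the paper proves this corollary in one line by invoking Theorem~\ref{DescartesQR} together with the inclusions of Remark~\ref{obs1}~(4). Your only addition is an explicit verification of those inclusions (the no-cancellation argument for sums of equal-signed minors), which the paper states in the remark without proof; this is a harmless and sound elaboration.
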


In Table \ref{table1} we summarize all type of situations analyzed above.

\begin{table}[h]
\centering
\begin{tabular}{|c|c|c|}
  \hline
  {\sf Type of matrix} & {\sf General case} & {\sf Symmetric case}   \\\hline 
   $\begin{array}{c} 
  \text{$Q$ or weak--$Q$ or }  \\
   \text{$P$ or weak--$P$}
   \end{array}$
    & $0$  negative & $n$   positive \\\hline
  $Q_0$ or $P_0$ & $0$    negative & $n$  non negative \\\hline
   $\begin{array}{c} 
   \text{almost $Q$ or} \\
   \text{almost $P$}
   \end{array}$ &  $1$ negative &  $n-1$ positive, and $1$ negative \\\hline
        \hline
    $\begin{array}{c} 
   \text{$R$ or weak--$R$ or } \\
   \text{$N$ or weak--$N$}
   \end{array}$
    & $1$  negative &  $n-1$ positive, and $1$ negative  \\\hline
  $R_0$ or $N_0$ &      $\leq 1$ negative &  $\begin{array}{c} \text{$n$ non negative, or} \\ \text{$n-1$ non negative and 1 negative}  \end{array}$   \\\hline
   $\begin{array}{c} 
   \text{almost $R$ or} \\
   \text{almost $N$}
   \end{array}$ &  $\begin{array}{c} \text{$0$ negative or} \\ \text{$2$ negative} \end{array}$ & $\begin{array}{c} \text{$n$ positive, or} \\ \text{$n-2$ positive and $2$ negative} \end{array}$ \\\hline
        \hline
    $\begin{array}{c} 
   \text{$QR$ or weak--$QR$ or } \\
   \text{$PN$ or weak--$PN$}
   \end{array}$
    & $1$  positive &  $n-1$ negative, and $1$ positive  \\\hline
  $(QR)_0$ or $(PN)_0$ &    $\leq 1$ positive &  $\begin{array}{c} \text{$n$ non positive, or} \\ \text{$n-1$ non positive and 1 positive}  \end{array}$   \\\hline
   $\begin{array}{c} 
   \text{almost $QR$ or} \\
   \text{almost $PN$}
   \end{array}$ &  $\begin{array}{c} \text{$0$ positive or} \\ \text{$2$ positive} \end{array}$ & $\begin{array}{c} \text{$n$ negative, or} \\ \text{$n-2$ negative and $2$ positive} \end{array}$ \\
      \hline
\end{tabular}\caption{Number of real eigenvalues of the different types of matrices}\label{table1}
\end{table}

\section{\bf Acknowledgements. }
Authors are partially supported by the grant PID2020-113192GB-I00 (Mathematical Visualization: Foundations, Algorithms and Applications) from the Spanish MICINN.


\end{document}